\documentclass[12pt]{amsart}  

\usepackage{custom_preamble} 

\begin{document}

\title{Schur's colouring theorem for non-commuting pairs}

\author{\tsname}
\address{\tsaddress}
\email{\tsemail}

\begin{abstract}
For $G$ a finite non-Abelian group we write $c(G)$ for the probability that two randomly chosen elements commute and $k(G)$ for the largest integer such that any $k(G)$-colouring of $G$ is guaranteed to contain a monochromatic quadruple $(x,y,xy,yx)$ with $xy\neq yx$.  We show that $c(G) \rightarrow 0$ if and only if $k(G) \rightarrow \infty$.
\end{abstract}

\maketitle

\section{Introduction}

Our starting point is Schur's theorem \cite[Hilffssatz]{sch::4}, the proof of which adapts to give the following.
\begin{theorem}\label{thm.st}
Suppose that $G$ is a finite group and $\mathcal{C}$ is a cover of $G$ of size $k$.  Then there is a set $A \in \mathcal{C}$ with at least $c_k|G|^2$ triples $(x,y,xy) \in A^3$ where $c_k$ is a constant depending only on $k$.
\end{theorem}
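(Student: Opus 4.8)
The plan is to reduce the cover to a colouring and then run the Ramsey-theoretic proof of Schur's theorem on the \emph{partial products} of a random word in $G$, so that a single application of Ramsey's theorem yields both a monochromatic configuration and a supersaturation (counting) bound.

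First I would fix a choice function $\chi\colon G\to\mathcal C$ with $g\in\chi(g)$ for every $g\in G$; it then suffices to find at least $c_k|G|^2$ triples $(x,y,xy)$ with $\chi(x)=\chi(y)=\chi(xy)$, since all three then lie in the common set $\chi(x)\in\mathcal C$. Let $r=R_k(3)$ be the $k$-colour Ramsey number for triangles. For a word $\mathbf g=(g_1,\dots,g_{r-1})\in G^{r-1}$ set $s_0=e$ and $s_i=g_1\cdots g_i$, and $k$-colour the edge $\{i,j\}$ (for $i<j$) of the complete graph on $\{0,1,\dots,r-1\}$ by $\chi(s_i^{-1}s_j)=\chi(g_{i+1}\cdots g_j)$. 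Ramsey's theorem produces a monochromatic triangle $i<j<l$, and since $(g_{i+1}\cdots g_j)(g_{j+1}\cdots g_l)=g_{i+1}\cdots g_l$ this is precisely a triple $(x,y,xy)$ with $\chi(x)=\chi(y)=\chi(xy)$; keeping the indices ordered is exactly what tames the non-commutativity, the middle vertex always providing the left-hand factor of the next segment.

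It remains to count. Among the $|G|^{r-1}$ words, assign to each one monochromatic triangle; then some fixed triple of indices $(i,j,l)$ is chosen by at least $|G|^{r-1}/\binom r3$ words. For that $(i,j,l)$ the map $\mathbf g\mapsto(g_{i+1}\cdots g_j,\,g_{j+1}\cdots g_l)$ has every fibre of size exactly $|G|^{r-3}$, so its image contains at least $|G|^2/\binom r3$ pairs $(x,y)$, each giving a distinct monochromatic triple $(x,y,xy)$. Pigeonholing over the $k$ colours gives one $A\in\mathcal C$ with at least $|G|^2/\bigl(k\binom r3\bigr)$ triples $(x,y,xy)\in A^3$, so one may take $c_k=1/\bigl(k\binom{R_k(3)}{3}\bigr)$ (with the trivial triple $(e,e,e)$ covering the few small groups where this bound is vacuous).

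The Ramsey input and the reduction from covers to colourings are immediate; the one place that needs care is the final double-counting step that upgrades a single monochromatic triple to order $|G|^2$ of them, in particular the check that the fibres of the segment-product map all have size $|G|^{r-3}$. (Alternatively one could colour the edges of $K_{|G|}$, with $G$ linearly ordered, by $\chi(a^{-1}b)$ for $a<b$ and then quote supersaturation for monochromatic triangles, but folding the count into the random-word argument avoids invoking that result separately.)
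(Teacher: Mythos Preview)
Your argument is correct. The paper does not actually write out a proof of this statement: it notes that Schur's original argument adapts routinely, and then observes that the result also follows from the stronger Theorem~\ref{thm.main} on quadruples $(x,y,xy,yx)$, which is proved via the non-Abelian corners theorem (triangle removal) in Proposition~\ref{prop.m}. What you have written is precisely the routine adaptation the paper alludes to, with the averaging over words supplying the supersaturation count; the fibre computation you flag as the delicate point is indeed fine, each fibre of $(g_1,\dots,g_{r-1})\mapsto(g_{i+1}\cdots g_j,\,g_{j+1}\cdots g_l)$ having size exactly $|G|^{r-3}$.

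The two routes differ substantially in the strength of their input and the quality of their output. Your proof uses only the finite Ramsey theorem and an elementary double count, and delivers an explicit constant $c_k=1/\bigl(k\binom{R_k(3)}{3}\bigr)$ of shape $\exp\bigl(-O(k\log k)\bigr)$. The paper's written route, passing through Proposition~\ref{prop.m} and the triangle-removal bound for $f_\Delta$, yields only a wowzer-type constant. On the other hand, the Ramsey-on-partial-products argument does not obviously extend to the quadruple problem---there is no evident edge-colouring of a complete graph in which a monochromatic triangle forces both $xy$ and $yx$ to receive the same colour---which is exactly why the paper needs the heavier machinery for Theorem~\ref{thm.main}.
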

The proof is a routine adaptation, but we shall not give it as the result as stated also follows from our next theorem.

If $G$ is non-Abelian then we might like to ask for quadruples $(x,y,xy,yx) \in A^4$ instead of triples and establishing the following such result (which we do in \S\ref{sec.trem}) is the main purpose of the paper.
\begin{theorem}\label{thm.main}
Suppose that $G$ is a finite group and $\mathcal{C}$ is a cover of $G$ of size $k$.  Then there is a set $A \in \mathcal{C}$ with $c_k|G|^2$ quadruples $(x,y,xy,yx) \in A^4$ where $c_k$ is a constant depending only on $k$.
\end{theorem}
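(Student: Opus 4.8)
The plan is to run a Schur-type argument in which the non‑commutativity forces exactly one new ingredient. First reduce to the case that $\mathcal{C}$ is a partition $G=A_1\sqcup\dots\sqcup A_k$: refining a cover only shrinks its parts, so a monochromatic quadruple for the refinement is one for $\mathcal{C}$, and the number of quadruples inside a part can only drop, so a lower bound for partitions suffices. After a left translation one normalises the configuration: writing $x=b$ and $y=b^{-1}c$, a monochromatic quadruple $(x,y,xy,yx)$ is precisely a pair $(b,c)\in G^2$ with $b,\ c,\ b^{-1}c,\ b^{-1}cb$ all in one $A_i$, and we want $c_k|G|^2$ of these. The first three conditions say that $(b,b^{-1}c,c)$ is a monochromatic Schur triple, and those one produces in abundance by the routine adaptation of Schur's proof behind Theorem~\ref{thm.st} (colour the pair $\{g,h\}$ with $g<h$ of a complete graph on many independent uniform elements of $G$ by $\phi(g^{-1}h)$, take a monochromatic triangle, and supersaturate by counting). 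The genuinely new condition is the fourth, $b^{-1}cb\in A_i$: this $b^{-1}cb$ is a conjugate of $c\in A_i$, and conjugates are exactly what a Cayley‑graph ``difference'' colouring is blind to --- so a single application of Ramsey's theorem cannot reach it.

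To control that conjugate I would split according to how the colouring sees conjugation, via a Fourier dichotomy together with an induction on $|G|$. Writing $N_i$ for the number of admissible pairs for colour $i$, one has $N_i=\sum_{b,u}1_{A_i}(b)1_{A_i}(u)1_{A_i}(bu)1_{A_i}(ub)$ after the substitution $u=b^{-1}c$; expanding $1_{A_i}=\alpha_i+f_i$ with $f_i$ balanced, the main term is $\alpha_i^4|G|^2$ and every error term is a mixed bilinear or trilinear form in translates of $f_i$. The non‑abelian Fourier expansion bounds these in terms of $\max_{\rho\neq 1}\|\widehat{f_i}(\rho)\|$, and since $\|\widehat{f_i}(\rho)\|$ is automatically small when $\dim\rho$ is large, this maximum is small unless $1_{A_i}$ correlates with a representation whose dimension is bounded in terms of $k$. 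In the ``no such correlation'' case one gets $N_i\geq\tfrac12\alpha_i^4|G|^2$, and as $\sum_i\alpha_i^4\geq k^{-3}$ the theorem follows. In the complementary case some $1_{A_i}$ correlates with a low‑dimensional representation $\rho$; a regularity/inverse argument then exhibits $A_i$ as approximately structured along $\rho$, and one either (a) pushes to a proper quotient $G/\ker\rho$ when that quotient is abelian --- where $xy=yx$ automatically, so $c_k|G|^2$ monochromatic Schur triples are $c_k|G|^2$ degenerate monochromatic quadruples, which lift back up the extension --- (b) applies the inductive hypothesis to a strictly smaller normal subgroup or quotient, or (c) uses Jordan's theorem to find a bounded‑index abelian subgroup and invokes Schur there.

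The main obstacle is making this dichotomy quantitatively seamless, and in particular the two genuinely non‑commutative points inside it. The first is the quasirandom estimate itself: because $N_i$ mixes a left translate and a right translate of $1_{A_i}$ it is not a plain convolution, so the passage $bu\mapsto ub=b^{-1}(bu)b$ must be pushed through the representation‑theoretic expansion rather than ignored, and one has to verify that the error terms so produced really are governed by the minimal nontrivial representation dimension of $G$ and not by the conjugation. The second is that nilpotent‑type groups --- the Heisenberg groups of order $p^3$ are the test case --- are neither quasirandom nor in possession of a useful bounded‑index subgroup, and their low‑dimensional representations need not be faithful, so the structured analysis there has to be carried out inside the fibres of the projection to the abelianisation: the slices of $A_i$ are then abelian sets whose Schur‑type structure must be played off against the bilinear commutator form relating $xy$ and $yx$, and making this twisted abelian problem yield $c_k|G|^2$ quadruples uniformly in $G$ is where I expect the real work to lie.
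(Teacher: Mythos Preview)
Your outline is not yet a proof, and the structured branch of your dichotomy is where it breaks. In the quasirandom case you are essentially re-deriving the Bergelson--Tao density result for $D$-quasirandom groups, and something along those lines can be made to work (though your claim that the cross terms are governed purely by $\max_{\rho\neq 1}\|\widehat{f_i}(\rho)\|$ needs care: the term $\sum_{b,u}f_i(bu)f_i(ub)=\sum_{v}f_i(v)\sum_b f_i(b^{-1}vb)$ is a class-function inner product and is \emph{not} automatically small from spectral data alone). The real gap is the complementary case. Your three exits (a), (b), (c) do not cover the ground: a low-dimensional $\rho$ need not have abelian image, so (a) is rarely available; Jordan's theorem in (c) controls the image of $\rho$, not $G$, and passing to the preimage of a bounded-index abelian subgroup of the image gives you a bounded-index subgroup of $G$ that need not be abelian at all; and for (b) you would have to show that the induced colouring on the quotient or subgroup still has bounded size and that the density loss at each iteration is controlled purely in terms of $k$, which your sketch does not address. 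Your own Heisenberg test case already exhibits the problem: the group is not quasirandom, has no useful bounded-index abelian subgroup, and its $p$-dimensional representations have central kernel, so the descent lands you in a non-trivial twisted abelian problem that you have not solved. For general $p$-groups of large nilpotency class the dichotomy would have to be iterated an unbounded number of times, and nothing in the outline bounds the accumulated loss by a function of $k$ alone.

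The paper takes a completely different and much shorter route that avoids the dichotomy entirely. It invokes a non-abelian corners theorem in $G\times G$ (Solymosi's argument via triangle removal): if $\mathcal{A}\subset G^2$ is dense then there are many $(x,y,z)$ with $(x,y),(zx,y),(x,yz)\in\mathcal{A}$. One takes $\mathcal{A}_i=\{(x,y):xs_iy\in A_i\}$ for suitable shifts $s_i$, intersects a carefully chosen initial segment of these to get a dense $\mathcal{A}$, applies the corners theorem, and then observes that a corner $(x,y),(zx,y),(x,yz)$ in $\mathcal{A}_i$ together with $z\in A_i$ yields exactly a quadruple $(a,z,za,az)\in A_i^4$ with $a=xs_iy$. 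A short popularity argument on $z$ forces many such $z$ to lie in some $A_i$ with $i$ in the initial segment. The point is that the left--right asymmetry you identify as the obstruction is absorbed into the corners configuration $(zx,y)$ versus $(x,yz)$, and triangle removal handles that uniformly over all finite groups with no structural case analysis.
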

When $G$ is non-Abelian we should like to ensure that at least one of the quadruples found in Theorem \ref{thm.main} has $xy\neq yx$, and to this end we define the \textbf{commuting probability} of a finite group $G$ to be
\begin{equation}\label{eqn.com}
c(G) :=\frac{1}{|G|^2}\sum_{x,y \in G}{1_{[xy=yx]}};
\end{equation}
in words it is the probability that a pair $(x,y) \in G^2$ chosen uniformly at random has $xy=yx$.  There are many nice results about the commuting probability -- see the introduction to \cite{heg::2} for details -- and it is an instructive exercise (see \cite{gus::0}) to check that if $c(G)<1$ then $c(G)\leq \frac{5}{8}$, so that if a group is non-Abelian there are `many' pairs that do not commute.  Despite this we have the following which we prove in \S\ref{sec.cy}.
\begin{proposition}\label{prop.col}
Suppose that $G$ is a finite group and $c(G) \geq \epsilon$.  Then there is a cover $\mathcal{C}$ of $G$ of size $\exp((2+o_{\epsilon\rightarrow 0}(1))\epsilon^{-1}\log \epsilon^{-1})$ such that if $A \in \mathcal{C}$ and $(x,y,xy,yx) \in A^4$ then $xy=yx$.
\end{proposition}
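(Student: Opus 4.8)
The plan is to reduce the statement to a structural fact about groups of large commuting probability and then write the cover down by hand. The structural input is (a quantitative form of) a theorem of P.\,M.~Neumann on the commuting probability: if $c(G)\ge\epsilon$ then $G$ has a \emph{normal} subgroup $H$ with $[G:H]$ and $|[H,H]|$ both bounded in terms of $\epsilon$. One cannot hope for $H$ itself to be abelian: an extraspecial $2$-group $G$ of order $2^{1+2n}$ has $c(G)>1/2$ for every $n$ but its largest abelian subgroup has index $2^{n}$, so here one must take $H=G$, which is fine since $|[G,G]|=2$; on the other hand $C_p\wr C_2$ does have a bounded-index abelian subgroup $H$ while $|[G,G]|=p$, so the second hypothesis is also doing real work.

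Granted such an $H$, I would build the cover from two devices. \emph{Inside $H$.} The subgroup $[H,H]$ is normal in $H$, and for $x,y\in H$ the elements $xy$ and $yx$ differ by the commutator $[x,y]\in[H,H]$ and so lie in a common coset of $[H,H]$ in $H$. Partition $H$ into $d:=|[H,H]|$ sets $T_1,\dots,T_d$, each a full set of coset representatives for $[H,H]$ in $H$ (possible since each coset has exactly $d$ elements). If $x,y,xy,yx$ all lie in one $T_i$, then $xy$ and $yx$ are elements of $T_i$ in the same coset of $[H,H]$, hence equal. \emph{Outside $H$.} For each coset $gH\ne H$, include $gH$ itself as a part of the cover. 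If $x,y\in gH$ then, by normality of $H$, $xy\in g^2H$; and $g^2H=gH$ would force $g\in H$, contrary to $gH\ne H$, so $xy\notin gH$ and this part contains no quadruple $(x,y,xy,yx)$ at all.

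Then $\mathcal C:=\{T_1,\dots,T_d\}\cup\{\,gH:gH\in G/H,\ gH\ne H\,\}$ is a partition, in particular a cover, of $G$, of size $d+[G:H]-1$, and by the two devices every $A\in\mathcal C$ with $(x,y,xy,yx)\in A^4$ has $xy=yx$. Feeding in the quantitative bounds on $[G:H]$ and $d$ gives $|\mathcal C|\le\exp((2+o_{\epsilon\to0}(1))\epsilon^{-1}\log\epsilon^{-1})$, as required.

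The colouring is elementary; the work --- and the step I expect to be the main obstacle --- is the quantitative structural statement. The natural route starts from $\epsilon\le c(G)=\frac1{|G|}\sum_{x\in G}\frac{|C_G(x)|}{|G|}$ and Markov's inequality, which yields a normal subset of density $\ge\epsilon/2$ consisting entirely of conjugacy classes of size at most $2\epsilon^{-1}$; one must then manufacture from this an honest \emph{normal} subgroup $H$ of bounded index and bounded derived subgroup. The passage to a subgroup and then to its normal core costs a factorial, and this is presumably the source both of the clean quantity $\epsilon^{-1}\log\epsilon^{-1}$ and of the constant $2$: the dominant term in $|\mathcal C|$ is $[G:H]$, of size about $(2\epsilon^{-1})!=\exp((2+o_{\epsilon\to0}(1))\epsilon^{-1}\log\epsilon^{-1})$, whereas $|[H,H]|$ is controlled far more cheaply by the Neumann--Wiegold theory of groups with bounded conjugacy classes. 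Keeping all the constants under control so as to reach exactly the exponent $2$ is the delicate part.
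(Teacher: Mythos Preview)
Your plan matches the paper's skeleton---find a normal subgroup $K\lhd G$ of bounded index, cover $G\setminus K$ by the non-trivial cosets of $K$, and cover $K$ itself by sets that separate $xy$ from $yx$---but the device you use on $K$ is different. You partition $K$ into $|[K,K]|$ transversals of its derived subgroup, using that $xy$ and $yx$ lie in the same $[K,K]$-coset. The paper instead uses that $xy$ and $yx$ are always \emph{conjugate}: having arranged that $|x^G|\le R$ for every $x\in K$, it fixes for each conjugacy class $C\subset K$ an injection $\phi_C:C\to\{1,\dots,R\}$ and takes as colour classes the fibres $\{x\in K:\phi_{x^G}(x)=i\}$, $1\le i\le R$. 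Both devices are valid, and the coset part outside $K$ is identical to yours.

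The substantive difference is quantitative, and here your sketch has a gap. Rather than quote Neumann, the paper builds $K$ by hand: from $X=\{x:|x^G|\le\eta^{-1}\}$ (of density $\kappa\gtrsim\epsilon$) it uses Kemperman's theorem to locate a subgroup $H$ of index $<2\kappa^{-1}$ inside a product set $X^{s+1}$ with $s+1\le 2\kappa^{-1}$, and takes $K$ to be the normal core of $H$. This gives $[G:K]\le\lfloor 2\kappa^{-1}\rfloor!$ and, by submultiplicativity of $x\mapsto|x^G|$, the bound $|x^G|\le\eta^{-(s+1)}=:R$ for all $x\in K$. With $\eta=\epsilon/\log\epsilon^{-1}$ both $R$ and $[G:K]$ sit at $\exp((2+o(1))\epsilon^{-1}\log\epsilon^{-1})$; the paper simply takes their maximum.

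Your variant needs $|[K,K]|$ in place of $R$. But from the construction above one only knows that $K$ is a BFC group with bound $R$, and the Neumann--Wiegold bound then gives $|[K,K]|\le R^{O(\log R)}$; since $\log R\asymp\epsilon^{-1}\log\epsilon^{-1}$, this overshoots the target by a full factor of $\epsilon^{-1}\log\epsilon^{-1}$ in the exponent. So your expectation that $[G:H]$ is the dominant term and $|[H,H]|$ comes ``far more cheaply'' is not borne out by the natural construction: in the paper the two terms are of the \emph{same} order, and the conjugacy-label device reaches that order directly while yours, filtered through Wiegold, does not. Your covering is correct; what is missing is either a construction of $H$ that controls $|[H,H]|$ to the right order without passing through a conjugacy-class bound, or a reason why such a bound on $|[H,H]|$ follows cheaply---and you have not supplied either.
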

If $G$ is non-Abelian we write $k(G)$ for the \textbf{non-commuting Schur number} of $G$, that is the largest natural number such that for any cover $\mathcal{C}$ of $G$ of size $k(G)$ there is some $A \in \mathcal{C}$ and $(x,y,xy,yx) \in A^4$ with $xy\neq yx$.  (Note that since $G$ is assumed non-Abelian we certainly have $k(G) \geq 1$.)

The number $k(G)$ has been studied for a range of specific groups by McCutcheon in \cite{mcc::1} and we direct the interested reader there for examples and further questions.
\begin{theorem}
Suppose that $(G_n)_n$ is a sequence of non-Abelian groups.  Then $c(G_n) \rightarrow 0$ if and only if $k(G_n) \rightarrow \infty$.
\end{theorem}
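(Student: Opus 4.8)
The plan is to deduce this directly from Theorem \ref{thm.main} and Proposition \ref{prop.col}; no new combinatorial ingredient is required, and I would treat the two implications separately. The one preliminary observation I would record is that the number of \emph{commuting} quadruples --- pairs $(x,y)\in G^2$ with $x,y,xy,yx$ lying in a prescribed set and, in addition, $xy=yx$ --- is controlled by $c(G)$: the map $(x,y)\mapsto (x,y,xy,yx)$ is injective, so the total number of pairs $(x,y)\in G^2$ with $xy=yx$ is exactly $c(G)|G|^2$ by \eqref{eqn.com}, and this bounds the number of such ``degenerate'' quadruples regardless of which set we restrict to.

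For the implication $c(G_n)\rightarrow 0 \implies k(G_n)\rightarrow \infty$ I would fix $k\in\mathbb{N}$, let $c_k>0$ be the constant from Theorem \ref{thm.main}, and show that $c(G)<c_k$ forces $k(G)\geq k$. Indeed, given any cover $\mathcal{C}$ of $G$ of size $k$, Theorem \ref{thm.main} yields $A\in\mathcal{C}$ containing at least $c_k|G|^2$ quadruples $(x,y,xy,yx)\in A^4$; since at most $c(G)|G|^2<c_k|G|^2$ of the quadruples of this shape anywhere in $G^4$ have $xy=yx$, at least one of those inside $A^4$ must satisfy $xy\neq yx$. Hence every size-$k$ cover of $G$ admits a monochromatic non-commuting quadruple, i.e.\ $k(G)\geq k$. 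As $c(G_n)\rightarrow 0$, for each fixed $k$ we have $c(G_n)<c_k$ for all large $n$, so $k(G_n)\geq k$ eventually; letting $k\rightarrow\infty$ gives $k(G_n)\rightarrow\infty$.

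For the converse I would argue by contraposition. Suppose $c(G_n)\not\rightarrow 0$, so $c(G_{n_j})\geq\epsilon$ along some subsequence for some fixed $\epsilon>0$. Proposition \ref{prop.col} then supplies, for every $j$, a cover of $G_{n_j}$ of some size $\ell=\ell(\epsilon)$ depending only on $\epsilon$ in which no set contains a non-commuting quadruple. Phrasing covers as colourings, the property ``every $k$-colouring of $G$ has a monochromatic non-commuting quadruple'' is monotone decreasing in $k$ (a $k'$-colouring with $k'<k$ is a $k$-colouring leaving colours unused, and any monochromatic quadruple uses only colours actually present), so the existence of such an $\ell$-colouring forces $k(G_{n_j})<\ell$ for all $j$. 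Thus $k(G_n)$ is bounded along a subsequence and $k(G_n)\not\rightarrow\infty$, completing the proof.

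I do not expect a genuine obstacle here: the substantive content lies entirely in Theorem \ref{thm.main} and Proposition \ref{prop.col}, and what remains is bookkeeping. The only two points I would take care over are the exact count $c(G)|G|^2$ of commuting quadruples (used to beat the $c_k|G|^2$ of Theorem \ref{thm.main}) and the trivial monotonicity of the colouring property in the number of colours.
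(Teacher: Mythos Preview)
Your proposal is correct and follows essentially the same route as the paper: both directions rest entirely on Theorem \ref{thm.main} and Proposition \ref{prop.col}, with the key observation that the number of commuting pairs is exactly $c(G)|G|^2$, so once $c(G)<c_k$ the $c_k|G|^2$ quadruples guaranteed by Theorem \ref{thm.main} cannot all commute. The only cosmetic difference is that the paper phrases the forward direction as a proof by contradiction (assuming $k(G_n)<k_0$ along a subsequence) rather than your direct ``$c(G)<c_k\Rightarrow k(G)\ge k$'' formulation, and is terser about the monotonicity you spell out.
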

\begin{proof}
The right to left implication follows immediately from Proposition \ref{prop.col}.  We can assume that $c_k$ is monotonically decreasing.  Suppose that $c(G_n) \rightarrow 0$ and there is a $k_0$ and an infinite set $S$ of naturals such that $k(G_n) < k_0$ for all $n \in S$.  Let $n \in S$ be such that $c(G_n)<c_{k_0}$ which can be done since $c(G_n) \rightarrow 0$ and $c_{k_0}>0$.

Since $k(G_n)<k_0$ there is a cover $\mathcal{C}$ of $G_n$ of size $k_0$ such that if $A\in \mathcal{C}$ and $(x,y,xy,yx) \in A^4$ then $xy=yx$.  By Theorem \ref{thm.main} there is $A \in \mathcal{C}$ such that $(x,y,xy,yx) \in A^4$ for at least $c_{k_0}|G_n|^2$ quadruples.  But then by design $xy=yx$ for all these pairs and so $c(G_n) \geq c_{k_0}$, a contradiction which proves the result.
\end{proof}

Before closing the section we need to acknowledge our debt to previous work.  In \cite{mcc::0} McCutcheon proves that $k(S_n) \rightarrow \infty$ as $n \rightarrow \infty$.  A short calculation shows that $c(S_n)\rightarrow 0$ as $n \rightarrow \infty$, and the possibility of showing that $k(G_n) \rightarrow \infty$ as $c(G_n) \rightarrow 0$ is identified by Bergelson and Tao in the remarks after \cite[Theorem 11]{bertao::0}.  Earlier, in \cite[Footnote 4]{bertao::0}, they also observe the significance of Neumann's work \cite{neu::1} which is the main idea behind the proof of Proposition \ref{prop.col}.

Write $D(G)$ for the smallest dimension of a non-trivial unitary representation of $G$.\footnote{This is called the quasirandomness of $G$ in \cite[Definition 1]{bertao::0} following the work of Gowers \cite{gow::2}}  In \cite[Corollary 8]{bertao::0} the authors show that $k(G_n) \rightarrow \infty$ as $D(G_n) \rightarrow \infty$, and in fact go further proving a density result.  For general finite groups there can be no density result; we refer the reader to the discussion after \cite[Theorem 11]{bertao::0} for more details.

\section{Proof of Theorem \ref{thm.main}}\label{sec.trem}

The proof of Theorem \ref{thm.main} is inspired by an attempt to translate the proof of \cite[Theorem 3.4]{bermcc::0} into a combinatorial setting.  There the authors use a recurrence theorem \cite[Theorem 5.2]{bermcczha::}; in its place we use a version of the Ajtai-Szemer{\'e}di Corners Theorem \cite{ajtsze::} for finite groups.  This was proved by Solymosi \cite[Theorem 2.1]{sol::0} using the triangle removal lemma.
\begin{theorem}\label{thm.triangle}
There is a function $f_\Delta:(0,1] \rightarrow (0,1]$ such that if $G$ is a finite group and $\mathcal{A} \subset G^2$ has size at least $\alpha |G|^2$ then
\begin{equation*}
S(\mathcal{A}):=\frac{1}{|G|^3}\sum_{x,y,z \in G}{1_{\mathcal{A}}(x,y)1_{\mathcal{A}}(zx,y)1_{\mathcal{A}}(x,yz)} \geq f_\Delta(\alpha).
\end{equation*}
\end{theorem}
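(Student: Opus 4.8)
\textbf{Proof proposal for Theorem \ref{thm.triangle}.}

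The plan is to deduce this from the triangle removal lemma via the standard reduction used by Solymosi \cite{sol::0}, adapted to an arbitrary finite group $G$. First I would build an auxiliary tripartite graph $H$ on vertex classes $X, Y, Z$, each a copy of $G$. Given $x \in X$, $y \in Y$, $z \in Z$, the edges are: $x \sim y$ whenever $(x,y) \in \mathcal{A}$; $y \sim z$ whenever $(z^{-1}x, y) \in \mathcal{A}$ for the relevant $x$ — more precisely, I want to set things up so that a triangle $\{x,y,z\}$ corresponds exactly to the event $(x,y)\in\mathcal{A}$, $(zx,y)\in\mathcal{A}$, $(x,yz)\in\mathcal{A}$. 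The clean way to do this: label the $Z$-vertex adjacent to both $x$ and $y$ by the value $w$, and declare $x \sim w$ iff $(w,y)\in\mathcal{A}$-type conditions hold; the key point is that the three group-element constraints appearing in $S(\mathcal{A})$ are each a condition on a pair of the three ``coordinates'' $x$, $zx$, $yz$ (equivalently on pairs among $x,y,z$), so each of the three conditions becomes one bipartite edge relation. Each ordered triple $(x,y,z)\in G^3$ then gives a (potential) triangle, and the number of triangles of $H$ is exactly $|G|^3 S(\mathcal{A})$ (up to the contribution of degenerate triangles, which I address below).

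The core of the argument is then: the hypothesis $|\mathcal{A}| \ge \alpha|G|^2$ forces $H$ to contain at least $\alpha|G|^2$ edge-disjoint triangles. Indeed, for each pair $(x,y) \in \mathcal{A}$, the element $z = e$ (identity) gives a triangle $\{x, y, z_{x,y}\}$ where $z_{x,y}$ is the $Z$-vertex determined by $x$ and $y$; because the $Z$-label is a function of the pair and $z=e$ forces the two other constraints to reduce to $(x,y)\in\mathcal{A}$ as well, these triangles exist. Moreover distinct pairs $(x,y)\in\mathcal{A}$ give triangles sharing no edge in the $X$--$Y$ class (their $X$--$Y$ edges are literally distinct), so we have $|\mathcal{A}| \ge \alpha|G|^2$ pairwise edge-disjoint triangles in a graph on $3|G|$ vertices. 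By the triangle removal lemma, a graph on $N$ vertices with at least $\delta N^2$ edge-disjoint triangles contains at least $g(\delta)N^3$ triangles in total, for some function $g:(0,1]\to(0,1]$. Applying this with $N = 3|G|$ and $\delta = \alpha/9$ gives at least $g(\alpha/9)(3|G|)^3$ triangles in $H$, hence $S(\mathcal{A}) \ge 27 g(\alpha/9) =: f_\Delta(\alpha) > 0$.

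The step I expect to be the main obstacle — or at least the one needing the most care — is the precise combinatorial bookkeeping in setting up $H$ so that (i) triangles correspond bijectively to the summand indicator being $1$, and (ii) the noncommutativity of $G$ does not break the correspondence. Concretely, one must check that the map from triples $(x,y,z)$ to triangles is well-defined and that the three bipartite adjacency relations are genuinely pairwise ``independent'' in the sense that each uses a different pair of coordinates; here the substitutions $x \mapsto x$, $x \mapsto zx$, $y \mapsto yz$ are designed exactly so that the $X$--$Z$ edge records $z$ via $zx$, the $Y$--$Z$ edge records $z$ via $yz$, and the $X$--$Y$ edge records neither — this is the finite-group analogue of Solymosi's corners-to-triangles encoding and it does go through for nonabelian $G$ because left and right translations are bijections. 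A secondary technical point is handling degenerate triangles (those with $z = e$, or other coincidences forcing two vertices to play the same role): these number at most $O(|G|^2) = o(|G|^3)$, so for $|G|$ larger than some absolute constant they are dominated by the $g(\alpha/9)|G|^3$ count, and for the finitely many small groups one adjusts $f_\Delta$ by a harmless constant (or notes $S(\mathcal{A}) \ge |G|^{-3}$ trivially from the $z=e$ terms). I would also remark that this is exactly \cite[Theorem 2.1]{sol::0}; the only thing to verify is that the statement there is phrased for general finite groups, and if it is stated only for abelian groups, that the proof sketched above is the routine extension.
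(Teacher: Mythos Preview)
Your approach is essentially the paper's: build the Solymosi tripartite graph, observe that triangles biject with triples $(x,y,z)$ having $(x,y),(zx,y),(x,yz)\in\mathcal{A}$, note that each $(x,y)\in\mathcal{A}$ contributes a ``trivial'' triangle via $z=1_G$, and invoke triangle removal. The paper makes the encoding explicit by labelling the third vertex $w=yzx$, with $(y,w)\in V_2\times V_3$ an edge iff $(y^{-1}w,y)\in\mathcal{A}$ and $(x,w)\in V_1\times V_3$ an edge iff $(x,wx^{-1})\in\mathcal{A}$; under this encoding all three edges of the $z=1_G$ triangle $(x,y,yx)$ correspond to the \emph{same} element $(x,y)\in\mathcal{A}$, which at once yields the full edge-disjointness you need (your justification via only the $X$--$Y$ edge is incomplete as written) and renders your degeneracy worries moot in the tripartite setting.
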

\begin{proof}
Following the proof of \cite[Theorem 2.1]{sol::0}, form a tripartite graph with three copies of $G$ as the vertex sets (call them $V_1,V_2,V_3$) and joining $(x,y) \in V_1 \times V_2$ if and only if $(x,y) \in \mathcal{A}$; $(y,w) \in V_2 \times V_3$ if and only if $(y^{-1}w,y) \in \mathcal{A}$; and $(x,w) \in V_1\times V_3$ if and only if $(x,wx^{-1}) \in \mathcal{A}$.  The map $G^3 \rightarrow G^3;(x,y,w) \mapsto (x,y,y^{-1}wx^{-1})$ is a bijection and $(x,y,w)$ is a triangle in this graph if and only if $(x,y),(zx,y),(x,yz) \in \mathcal{A}$ where $z=y^{-1}wx^{-1}$.

It follows from \cite[Theorem 1.1]{tao::3} that one can remove at most $3\cdot o_{S(\mathcal{A}) \rightarrow 0}(|G|^2)=o_{S(\mathcal{A}) \rightarrow 0}(|G|^2)$ elements from $\mathcal{A}$ to make the graph triangle-free.  On the other hand if $(x,y) \in \mathcal{A}$ then $(x,y,xy)$ is a triangle in the above graph, hence we must have removed all elements from $\mathcal{A}$ and $\alpha |G|^2 \leq o_{S(\mathcal{A}) \rightarrow 0}(|G|^2)$ from which the result follows.
\end{proof}
There are a number of subtleties around the extent to which one can replace, say, $(zx,y)$ with $(xz,y)$, and we refer the reader to the papers of Solymosi \cite{sol::0} and Austin \cite{aus::1} for some discussion.

We take the convention, as we can, that the function $f_\Delta$ is monotonically increasing and $f_\Delta(x)\leq x$ for all $x \in (0,1]$.  Even with Fox's work \cite{fox::} we only have $f_\Delta(\alpha)^{-1} \leq T(O(\log \alpha^{-1}))$ in general.  However, when $G$ is Abelian much better bounds are known as a result of the beautiful arguments of Shkredov \cite{shk::1,shk::0,shk::6}.  It seems likely these could be adapted to give a bound with a tower of bounded height if the Fourier analysis is adapted to the non-Abelian setting in the same way as it is for Roth's theorem in \cite{san::13}.  Doing so would give a quantitative version of \cite[Theorem 10]{bertao::0} (see \cite[Remark 44]{bertao::0}), but the improvement to Theorem \ref{thm.main} would only be to replace a wowzer-type function with a tower as we shall see shortly.

We shall prove the following proposition from which Theorem \ref{thm.main} follows immediately on inserting the bound for $f_\Delta$ given by Theorem \ref{thm.triangle}.
\begin{proposition}\label{prop.m}
Suppose $G$ is a finite group and $\mathcal{C}$ is a cover of $G$ of size $k$.  Then there is a set $A \in \mathcal{C}$ with $(g^{(k+1)}(1))^2|G|^2$ quadruples $(x,y,xy,yx) \in A^4$ where $g^{(k+1)}$ is the $(k+1)$-fold composition of $g$ with itself, and $g:(0,1] \rightarrow (0,1]; \alpha \mapsto (3k)^{-1}f_\Delta(\alpha^k)$.
\end{proposition}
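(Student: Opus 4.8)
The plan is to prove Proposition~\ref{prop.m} by induction on the cover size $k$, extracting one set from the cover at each stage and either winning immediately or passing to a smaller cover of a dense ``popular difference'' structure. Concretely, given a cover $\mathcal{C} = \{A_1,\dots,A_k\}$ of $G$, I would look at the density increment coming from the Ajtai--Szemer\'edi/corners input of Theorem~\ref{thm.triangle}: for a set $A$ of density $\alpha$ (viewing $A$ also as a subset of $G$ rather than $G^2$), I want to find many $z$ for which the ``slices'' $\{x : x,zx \in A\}$ or similar are large, so that restricting attention to such a $z$ reduces the problem. The key observation is that a monochromatic quadruple $(x,y,xy,yx)\in A^4$ is exactly what Theorem~\ref{thm.triangle} produces when one feeds in the set $\mathcal{A} = \{(x,y) : x \in A, y \in A, xy \in A\} \subset G^2$ and asks for a triangle $(x,y),(zx,y),(x,yz)\in\mathcal{A}$ --- tracking the definitions, such a triangle with the right change of variables yields $yx$ (or $xy$) landing in $A$ as well.

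First I would set up the quantitative bookkeeping: let $\alpha_i = |A_i|/|G|$, so $\sum_i \alpha_i \geq 1$, hence some $A := A_i$ has density at least $1/k$. I then consider the set $\mathcal{A} \subset G^2$ of pairs $(x,y)$ with $x,y,xy$ all in $A$; Theorem~\ref{thm.st}-style counting (or a direct averaging argument) shows that if $A$ already has $\geq c|G|^2$ triples $(x,y,xy)\in A^3$ then $\mathcal{A}$ has density $\geq c$ in $G^2$. If $A$ has \emph{few} such triples, I am in the regime where a density-increment step on the cover is forced: the elements of $G^2$ \emph{not} caught as a triple of $A$ must be covered by triples of the other $A_j$'s, and one massages this into a cover of a large subset by $k-1$ sets, applying induction. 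If $A$ has \emph{many} such triples, so $\mathcal{A}$ is dense, Theorem~\ref{thm.triangle} gives $\geq f_\Delta(\alpha')|G|^3$ triangles, each of which I must translate back into a genuine quadruple $(x,y,xy,yx)\in A^4$ with the bijection $(x,y,w)\mapsto (x,y,y^{-1}wx^{-1})$ from the proof of Theorem~\ref{thm.triangle}, possibly losing a factor of $|G|$ and hence passing from a triangle count $f_\Delta(\alpha')|G|^3$ to a quadruple count of order $f_\Delta(\alpha')|G|^2$. The function $g(\alpha) = (3k)^{-1} f_\Delta(\alpha^k)$ is engineered so that the $\alpha^k$ absorbs the repeated ``density at least $1/k$'' losses over the $k$ rounds and the $(3k)^{-1}$ absorbs the triangle-to-quadruple and cover-reduction overheads, so that after $k+1$ applications one lands on the claimed bound $(g^{(k+1)}(1))^2|G|^2$; I would verify the recursion $N_k(\alpha) \geq g(N_{k-1}(\cdots))$-style inequality carefully but not belabour the arithmetic here.

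The step I expect to be the main obstacle is the reduction from a cover of $G$ by $k$ sets to an honest application of Theorem~\ref{thm.triangle} on a single dense $\mathcal{A}\subset G^2$ \emph{together with} a clean recursion on $k$: one has to be careful that the ``few triples'' branch genuinely produces a cover (not merely a partition-like structure) of a subset of $G$, or of $G^2$, of controlled density by $k-1$ of the remaining colour classes, so that the inductive hypothesis applies with the density parameter tracked exactly. A secondary subtlety, flagged in the excerpt's remarks after Theorem~\ref{thm.triangle}, is that the corners theorem naturally gives triples of the form $(x,y),(zx,y),(x,yz)$ with $z$ on a fixed side, so I must check that the change of variables turning a triangle into a quadruple $(x,y,xy,yx)$ --- equivalently, getting both $xy$ \emph{and} $yx$ into $A$ --- goes through without needing the ``wrong-sided'' variant; tracing the three edge conditions in the proof of Theorem~\ref{thm.triangle} shows it does, since one edge already encodes membership of a conjugate-type product, but I would spell this identification out explicitly as it is the conceptual heart of why quadruples (and not just triples) come for free.
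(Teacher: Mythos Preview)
Your central claim --- that feeding $\mathcal{A}=\{(x,y):x,y,xy\in A\}$ into Theorem~\ref{thm.triangle} already produces quadruples $(a,b,ab,ba)\in A^4$ --- is not correct, and this is the genuine gap. Unpacking the three corner conditions $(x,y),(zx,y),(x,yz)\in\mathcal{A}$ gives exactly
\[
x,\ y,\ xy,\ zx,\ zxy,\ yz,\ xyz\ \in A,
\]
and no choice of $(a,b)$ from this list has both $ab$ and $ba$ in the list: the natural candidate $(a,b)=(xy,z)$ gives $ab=xyz\in A$ and $ba=zxy\in A$, but it still needs $z\in A$, which nothing in the corner configuration guarantees. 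The change of variables $(x,y,w)\mapsto(x,y,y^{-1}wx^{-1})$ you invoke from the proof of Theorem~\ref{thm.triangle} is only a device for counting triangles versus corners; it does not place $z$ in any colour class.

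This missing control over the colour of $z$ is precisely what the paper's argument is built to supply, and it does so by a route quite different from your proposed induction on $k$. The paper does \emph{not} take $\mathcal{A}$ to encode three membership conditions in a single colour; instead it takes $\mathcal{A}_i=\{(x,y):xs_iy\in A_i\}$ for each of the $r$ densest colours (with shifts $s_i$ chosen by averaging), sets $\mathcal{A}=\bigcap_{i\le r}\mathcal{A}_i$, and applies Theorem~\ref{thm.triangle} to that intersection. A corner in $\mathcal{A}$ then gives $xs_iy,\,zxs_iy,\,xs_iyz\in A_i$ simultaneously for all $i\le r$, so that once $z$ is known to lie in some $A_i$ with $i\le r$ one gets the quadruple $(z,xs_iy,zxs_iy,xs_iyz)\in A_i^4$. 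The index $r$ is chosen so that the sparse colours $A_{r+1},\dots,A_k$ have total density below $\tfrac{1}{3}f_\Delta(\alpha_1\cdots\alpha_r)$; this lets one strip them from the set $Z$ of popular $z$'s and still have $Z$ large, after which pigeonhole forces many $z\in Z$ into some dense $A_i$. Your inductive ``few triples'' branch does not provide a substitute for this step: having few monochromatic triples in one colour gives no cover of $G$ (or of any controlled subset) by the remaining $k-1$ colours.
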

\begin{proof}
Write $A_1,\dots,A_k$ for the sets in $\mathcal{C}$ ordered so that they have density $\alpha_1 \geq \dots\geq \alpha_k$ respectively; since $\mathcal{C}$ is a cover we have $\alpha_1 \geq \frac{1}{k}$.  Let $r \in \{1,\dots,k\}$ be minimal such that
\begin{equation}\label{eqn.recurrence}
\frac{1}{3}f_\Delta(\alpha_1\cdots \alpha_r) \geq \alpha_{r+1}+\dots+\alpha_k,
\end{equation}
which is possible since the sum on the right is empty and so $0$ when $r=k$.  From minimality and the order of the $\alpha_i$s we have
\begin{equation*}
\alpha_{i+1} > \frac{1}{3k}f_\Delta(\alpha_1\cdots \alpha_i) \text{ for all } 1 \leq i \leq r-1.
\end{equation*}
The function $f_\Delta$ is monotonically increasing and $f_\Delta(x) \leq x$ for all $x \in (0,1]$ so it follows from the above that $\alpha_r \geq g^{(r)}(1) \geq g^{(k)}(1)$.

Now, suppose that $s_1,\dots,s_r \in G$ and write
\begin{equation*}
\mathcal{A}_i:=\{(x,y) \in G^2: xs_iy \in A_i\} \text{ for } 1 \leq i \leq r.
\end{equation*}
Then
\begin{equation*}
\E_{s_i \in G}{1_{\mathcal{A}_i}(x,y)} = \alpha_i \text{ for all } x,y \in G \text{ and } 1 \leq i \leq r,
\end{equation*}
and so
\begin{equation*}
\E_{s \in G^r}{\left|\bigcap_{i=1}^r{\mathcal{A}_i}\right|} = \sum_{x,y \in G}{\E_{s \in G^r}{\prod_{i=1}^r{1_{\mathcal{A}_i}(x,y)}}} = \alpha_1\cdots\alpha_r |G|^2.
\end{equation*}
By averaging we can pick some $s \in G^r$ such that $\mathcal{A}:=\bigcap_{i=1}^r{\mathcal{A}_i}$ has $|\mathcal{A}| \geq \alpha_1\cdots\alpha_r|G|^2$.

By the definition of $f_\Delta$ (from Theorem \ref{thm.triangle}) we have
\begin{equation*}
 \E_{x,y,z \in G}{1_{\mathcal{A}}(x,y)1_{\mathcal{A}}(zx,y)1_{\mathcal{A}}(x,yz)} = S(\mathcal{A}) \geq  f_\Delta(\alpha_1\cdots \alpha_r);
\end{equation*}
write
\begin{equation*}
Z:=\left\{z \in G: \E_{x,y \in G}{1_{\mathcal{A}}(x,y)1_{\mathcal{A}}(zx,y)1_{\mathcal{A}}(x,yz)} \geq \frac{1}{3}f(\alpha_1\cdots \alpha_r)\right\}.
\end{equation*}
Then
\begin{eqnarray*}
\P(Z) + \frac{1}{3}f_\Delta(\alpha_1\cdots \alpha_r) & \geq & \E_{x,y,z \in G}{1_{Z \sqcup (G \setminus Z)}(z)1_{\mathcal{A}}(x,y)1_{\mathcal{A}}(zx,y)1_{\mathcal{A}}(x,yz)}\\ & = & S(\mathcal{A}) \geq  f_\Delta(\alpha_1\cdots \alpha_r),
\end{eqnarray*}
and hence $\P(Z) \geq \frac{2}{3}f_\Delta(\alpha_1\cdots \alpha_r)$.  But then
\begin{eqnarray*}
\P(Z \setminus (A_{r+1} \cup \dots \cup A_{k})) & \geq & \frac{2}{3}f_\Delta(\alpha_1\cdots \alpha_r) - (\alpha_{r+1}+\dots+\alpha_k)\\ & \geq & \frac{1}{3}f_\Delta(\alpha_1\cdots \alpha_r)
\end{eqnarray*}
by (\ref{eqn.recurrence}).  Since $\bigcup_{i=1}^k{A_i} = G$ we conclude that there is some $1 \leq i \leq k$ such that
\begin{equation*}
\P((Z \setminus (A_{r+1} \cup \dots \cup A_{k}))\cap A_i) \geq \frac{1}{3r}f_\Delta(\alpha_1\cdots \alpha_r).
\end{equation*}
If course $(Z \setminus (A_{r+1} \cup \dots \cup A_{k}))\cap A_j=\emptyset$ for $r<j \leq k$ and so we may assume $i \leq r$.

Write $Z':= (Z \setminus (A_{r+1} \cup \dots \cup A_{k}))\cap A_i$. Since $Z' \subset Z$ we have
\begin{equation*}
\E_{x,y}{1_{\mathcal{A}_i}(x,y)1_{\mathcal{A}_i}(zx,y)1_{\mathcal{A}_i}(x,yz)}\geq \E_{x,y}{1_{\mathcal{A}}(x,y)1_{\mathcal{A}}(zx,y)1_{\mathcal{A}}(x,yz)} \geq \frac{1}{3}f_\Delta (\alpha_1\cdots \alpha_r)
\end{equation*}
for all $z \in Z'$.  On the other hand every $z \in Z'$ has $z \in A_i$ and so we conclude that there are at least
\begin{equation*}
 \frac{1}{3}f_\Delta (\alpha_1\cdots \alpha_r)|G|^2\cdot \frac{1}{3r}f_\Delta(\alpha_1\cdots \alpha_r)|G|
\end{equation*}
triples $(x,y,z) \in G^3$ such that
\begin{equation*}
z \in A_i, xs_iy \in A_i, zxs_iy \in A_i, \text{ and } xs_iyz \in A_i. 
\end{equation*}
The map $(x,y,z) \mapsto (xs_iy,z)$ has all fibres of size $|G|$ and so there are at least
\begin{equation*}
\frac{1}{9r}f_\Delta(\alpha_1\cdots \alpha_r)^2 |G|^2 \geq (g(\alpha_r))^2|G|^2
\end{equation*}
pairs $(a,b) \in G^2$ such that $a,b,ab,ba \in A_i$. This gives the result.
\end{proof}

\section{Proof of Proposition \ref{prop.col}}\label{sec.cy}

The key idea comes from Neumann's theorem \cite[Theorem 1]{neu::1} which is already identified in \cite[Footnote 4]{bertao::0}.  Neumann's theorem describes the structure of groups $G$ for which $c(G) \geq \epsilon$ -- they are the groups containing normal subgroups $K \leq H \leq G$ such that $K$ and $G/H$ have size $O_{\epsilon}(1)$ and $H/K$ is Abelian.  Neumann's theorem was further developed in \cite[Theorem 2.4]{ebe::1}, but both arguments provide a more detailed structure than we require.

We have made some effort to control the exponent; results such as \cite[Lemma 2.1]{ebe::1} or \cite[Theorem 2.2]{ols::0} could be used in place of Kemperman's Theorem in what follows at the possible expense of the $2$ becoming slightly larger.  Moving the $2+o_{\epsilon\rightarrow 0}(1)$ below $1$ would require a slightly different approach as we normalise a subgroup of index around $\epsilon^{-1}$ at a certain point which costs us a term of size $\epsilon^{-1}!$.
\begin{proposition*}[Proposition \ref{prop.col}]
Suppose that $G$ is a finite group and $c(G) \geq \epsilon$.  Then there is a cover $\mathcal{C}$ of $G$ of size $\exp((2+o_{\epsilon\rightarrow 0}(1))\epsilon^{-1}\log \epsilon^{-1})$ such that if $A \in \mathcal{C}$ and $(x,y,xy,yx) \in A^4$ then $xy=yx$.
\end{proposition*}
\begin{proof}
We work with the conjugation action of $G$ on itself (\emph{i.e.} $(g,x)\mapsto g^{-1}xg$), and write $x^G$ for the conjugacy class of $x$ (the orbit of $x$ under this action), and $C_G(x)$ for the centre of $x$ in $G$ (the stabiliser of $x$ under this action).

Let $\eta,\nu \in (0,1]$ be parameters (we shall take $\nu=\frac{1}{2}$ and $\eta = \epsilon/\log \epsilon^{-1})$) to be optimised later and put
\begin{equation*}
X:=\{x \in G: |x^G| \leq \eta^{-1}\}.
\end{equation*}
Then
\begin{equation*}
\epsilon |G|^2 \leq |G|^2\P(xy=yx) = \sum_{x}{|C_G(x)|} =|G|\sum_{x}{\frac{1}{|x^G|}}\leq \sum_{x \in X}{|G|} + \sum_{x \not \in X}{\eta|G|}.
\end{equation*}
Writing $\kappa:=|X|/|G|$ we can rearrange the above to see that $\kappa \geq (\epsilon-\eta)/(1-\eta)$.

Suppose that $s \in \N$ is maximal such that
\begin{equation*}
|\overbrace{X\cdots X}^{s \text{ times}}| \geq\left( 1+(1-\nu)(s-1)\right)|X|.
\end{equation*}
Certainly there is some $s \in \N$ since the inequality certainly holds for $s=1$, and there is a maximal such with $s \leq \frac{\kappa^{-1}-\nu}{1-\nu}$ since $|X| \geq \kappa |G|$.

Since $1_G^G=\{1_G\}$ we have $1_G \in X$ and $1_G \in X \cdots X$ for any $s$-fold product.  By Kemperman's Theorem \cite[Theorem 5]{kem::0} (also recorded on \cite[p111]{ols::1}, and which despite the additive notation does not assume commutativity) it follows that there is some $H \leq G$ such that
\begin{equation*}
|\overbrace{X\cdots X}^{s+1 \text{ times}}| \geq |\overbrace{X\cdots X}^{s \text{ times}}| + |X| - |H| \text{ and } H\subset \overbrace{X\cdots X}^{s+1 \text{ times}}.
\end{equation*}
By maximality of $s$ we know that
\begin{equation*}
\left( 1+(1-\nu)s\right)|X|>|\overbrace{X\cdots X}^{s+1 \text{ times}}|  \geq \left( 1+(1-\nu)(s-1)\right)|X|+|X|-|H|,
\end{equation*}
so that $|H| > \nu|X|$, and so $|G/H| < \nu^{-1}\kappa^{-1}$.

Let $K$ be the kernel of the action of left multiplication by $G$ on $G/H$ \emph{i.e.} $K:=\{x \in G: xgH=gH \text{ for all }g \in G\}$.  The action induces a homomorphism from $G$ to $\Sym(G/H)$ so that by the First Isomorphism Theorem
\begin{equation*}
K \triangleleft G \text{ and }|G/K| \leq |\Sym(G/H)| \leq |G/H|!.
\end{equation*}
Each $x \in H$ (and hence each $x \in K$ since $xH=H$ for such $x$) can be written as a product of $s+1$ elements of $X$.  Moreover, the function $x \mapsto |x^G|$ is sub-multiplicative \emph{i.e.} $|(xy)^G| \leq |x^G||y^G|$ and so it follows that
\begin{equation*}
|x^G| \leq \eta^{-(s+1)} \leq R:=\left\lfloor \eta^{-\frac{\kappa^{-1}+1-2\nu}{1-\nu}}\right\rfloor \text{ for all }x \in X^{s+1},
\end{equation*}
and in particular for all $x \in K$.  Thus for each $x \in K$ there is an injection $\phi_{x^G}:x^G \rightarrow \{1,\dots,R\}$.  With this notation we can define our covering: let 
\begin{equation*}
\mathcal{S}:=\{\{x\in K :\phi_{x^G}(x)=i \}: 1 \leq i \leq R\} \text{ and } \mathcal{C}:=((G/K) \setminus \{K\} )\cup \mathcal{S},
\end{equation*}
so that $\mathcal{S}$ is a cover of $K$ and $\mathcal{C}$ is a cover of $G$.  Now
\begin{align*}
|\mathcal{C}| \leq |G/K|-1 + |\mathcal{S}| & \leq \lfloor \nu^{-1}\kappa^{-1}\rfloor! -1 +R\\ & \leq\exp\left( \max\left\{\nu^{-1}\kappa^{-1}\log  \nu^{-1}\kappa^{-1},\frac{\kappa^{-1}+1-2\nu}{1-\nu}\log\eta^{-1}\right\}+O(1)\right).
\end{align*}
Optimise this by taking $\nu = \frac{1}{2}$ and $\eta=\epsilon/\log \epsilon^{-1}$ as mentioned before so that $\kappa \geq \epsilon (1-o_{\epsilon\rightarrow 0}(1))$ and $\log \eta^{-1} = (1+o_{\epsilon \rightarrow 0}(1))\log \epsilon^{-1}$.

Suppose that $A \in \mathcal{C}$ and $x,y,xy,yx \in A$.  If $A \in (G/K) \setminus \{K\}$ then $xK=yK=xyK=yxK=A$.  Since $K \triangleleft G$ we have $xK=xyK=(xK)(yK)$ and so $yK=K$ which is a contradiction.  It follows that $A \in \mathcal{S}$ and hence $x,y,xy,yx \in K$.  We conclude $\phi_{(xy)^G}(xy)=\phi_{(yx)^G}(yx)$ but $xy =y^{-1}(yx)y$ and so $(xy)^G = (yx)^G$.  Since $\phi_{(xy)^G}$ is an injection we conclude that $xy=yx$ as required.

The result is proved.
\end{proof}

\bibliographystyle{halpha}

\bibliography{references}

\end{document}